\titleformat{\subsection}[runin]
{\bfseries} {\thesubsection{.}}{0.15cm}{}[.]
\titleformat{\subsubsection}[runin]
{\em}{\thesubsubsection{.}}{0.15cm}{}[.]
\newtheorem{theorem}{Theorem}[section]
\newtheorem{proposition}[theorem]{Proposition}
\theoremstyle{definition}
\newtheorem{definition}[theorem]{Definition}
\newtheorem{remark}[theorem]{Remark}
\newtheorem{problem}[theorem]{Problem}
\newtheorem{example}[theorem]{Example}
\numberwithin{equation}{section}
\numberwithin{figure}{section}
\newcommand\Cscr{\mathscr{C}}
\newcommand\Oscr{\mathscr{O}}
\newcommand\B{\mathbb{B}}
\newcommand\C{\mathbb{C}}
\newcommand\N{\mathbb{N}}
\newcommand\R{\mathbb{R}}
\newcommand\igot{\mathfrak{i}}
\renewcommand\igot{\mathfrak{i}}
\renewcommand\imath{\igot}
\newcommand\wh{\widehat}
\newcommand\dist{\mathrm{dist}}
\def\dist{\mathrm{dist}}
\begin{document}

\fancyhead[LO]{Surjective holomorphic maps onto Oka manifolds}
\fancyhead[RE]{F.\ Forstneri\v c} 
\fancyhead[RO,LE]{\thepage}

\thispagestyle{empty}

\vspace*{1cm}
\begin{center}
{\bf\LARGE Surjective holomorphic maps onto Oka manifolds}

\vspace*{0.5cm}

{\large\bf  Franc Forstneri\v c} 
\end{center}


\vspace*{1cm}

\begin{quote}
{\small
\noindent {\bf Abstract}\hspace*{0.1cm}
Let $X$ be a connected Oka manifold, and let $S$ be a Stein manifold with $\dim S \ge \dim X$.
We show that every continuous map $S\to X$ is homotopic to a surjective strongly dominating 
holomorphic map $S\to X$. We also find strongly dominating algebraic morphisms from the affine
$n$-space onto any compact $n$-dimensional algebraically subelliptic manifold. 
Motivated by these results, we propose a new holomorphic flexibility property of complex manifolds, the 
{\em basic Oka property with surjectivity}, which could potentially provide another characterization
of the class of Oka manifolds.  

\vspace*{0.2cm}

\noindent{\bf Keywords}\hspace*{0.1cm} Stein manifold, Oka manifold, holomorphic map, algebraic map

\vspace*{0.1cm}


\noindent{\bf MSC (2010):}\hspace*{0.1cm}}  32E10, 32H02, 32Q45, 14A10 
\end{quote}


\section{Introduction} 
\label{sec:intro}

A complex manifold $X$ is said to be an {\em Oka manifold}
if every holomorphic map $U\to X$ from an open convex set $U$ in a complex Euclidean space 
$\C^N$ can be approximated uniformly on compacts in $U$ by holomorphic maps $\C^N\to X$.
This {\em convex approximation property} (CAP) of $X$, which was first introduced in \cite{Forstneric2006AM},
implies that maps  from any Stein manifold $S$ to $X$ satisfy the parametric Oka principle
with approximation and interpolation (see \cite[Theorem 5.4.4]{Forstneric2011};  it suffices
to verify CAP for the integer $N=\dim S+\dim X$).  In particular, every continuous
map $f\colon S\to X$ from a Stein manifold $S$ to an Oka manifold $X$ 
is homotopic to a holomorphic map $F\colon S\to X$, and $F$ can chosen to approximate $f$ 
on a compact $\Oscr(S)$-convex set $K\subset S$ provided that $f$ is  holomorphic on a neighborhood of $K$.  
For the theory of Oka manifolds, we refer to the monograph \cite{Forstneric2011} and the 
surveys \cite{Forstneric2013,ForstnericLarusson2011,Kutzschebauch2014};
for the theory of Stein manifolds, see \cite{GunningRossi,HormanderComplex}.

In this  note, we construct {\em surjective} holomorphic maps from Stein manifolds to Oka manifolds,
and surjective algebraic morphisms of affine algebraic manifolds to certain compact algebraic manifolds.
We say that a (necessarily surjective) holomorphic map $F\colon S\to X$ is {\em strongly dominating}
if for every point $x\in X$ there exists a point $p\in S$ such that $F(p)=x$ and 
$dF_p\colon T_p S\to T_x X$ is surjective.  Equivalently, $F(S\setminus\mathrm{br}_F)=X$
where $\mathrm{br}_F \subset S$ is the branch locus of $F$. 

 
%
%
\begin{theorem} \label{th:main}
Let $X$ be a connected Oka manifold. If $S$ is a Stein manifold and $\dim S\ge \dim X$ then 
every continuous map $f\colon S\to X$ is homotopic to a strongly dominating (surjective) holomorphic map $F\colon S\to X$. 
In particular, there exists a strongly dominating holomorphic map $F\colon \C^n\to X$ for $n=\dim X$.
\end{theorem}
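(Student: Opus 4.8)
\emph{Proof proposal.} Write $n=\dim X$ and $m=\dim S\ge n$. The plan is to realize $F$ as a limit of holomorphic maps $F_j\colon S\to X$, all homotopic to $f$, obtained by successively prescribing on a discrete family of small balls in $S$ local submersions that together cover $X$; the Oka property of $X$ supplies each correction, and Cauchy estimates upgrade uniform approximation to the $C^1$-control needed to keep the maps submersive where it matters. The final assertion is then immediate by applying the theorem to $S=\C^n$ and an arbitrary continuous $f\colon\C^n\to X$.

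First I would manufacture the local data. Since $X$ is a second countable manifold, fix closed coordinate balls $L_i=\overline{B(x_i,r_i)}\subset X$ with $\bigcup_i L_i=X$, each contained in a holomorphic chart. For each $i$, CAP applied to the chart containing $L_i$ (a holomorphic map from a convex set in $\C^n$) yields a holomorphic map $g_i\colon\C^n\to X$ so close to that chart on a slightly larger ball that $g_i$ is a submersion there whose image contains $L_i$. Composing with a linear projection $\C^m\to\C^n$ (here $m\ge n$ is used) gives a holomorphic submersion $\sigma_i$ on a ball $B_i''\subset\C^m$, of full rank $n$ on a relatively compact subball $\overline{B_i'}\Subset B_i''$, with $\sigma_i(B_i')\supset L_i$. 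In $S$ I would then choose a discrete sequence of points $p_i$ carrying pairwise disjoint charts, transport the balls $B_i'\Subset B_i''$ into $S$ around $p_i$, and regard each $\sigma_i$ as a submersion defined near $p_i$ in $S$.

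The recursive step uses the parametric Oka principle with approximation and interpolation recalled in the Introduction. Fix a normal exhaustion of $S$ by $\Oscr(S)$-convex compacts $K_1\subset K_2\subset\cdots$, arranged so that each $\overline{B_j''}$ is disjoint from $K_{j-1}$ and is eventually absorbed by the $K_j$. Start from a holomorphic $F_0\simeq f$ (given by the Oka property). At step $j$, splice $\sigma_j$ into $F_{j-1}$ near $p_j$: since $\overline{B_j''}$ is a ball disjoint from $K_{j-1}$, I can build a continuous map $g_j\colon S\to X$, homotopic to $f$, equal to $F_{j-1}$ near $K_{j-1}$, equal to $\sigma_j$ near $\overline{B_j''}$, and holomorphic there. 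Applying the Oka principle on the $\Oscr(S)$-convex compact $K_{j-1}\cup\overline{B_j''}$ produces a holomorphic $F_j\simeq f$ that is $\varepsilon_j$-close to $F_{j-1}$ on $K_{j-1}$ and $\delta_j$-close to $\sigma_j$ on $\overline{B_j''}$. By Cauchy estimates, uniform closeness on $\overline{B_j''}$ forces $C^1$-closeness on the smaller ball $B_j'$; hence, for $\delta_j$ small, $F_j$ is a submersion on $B_j'$ with $F_j(B_j')\supset L_j$.

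The final and most delicate point is to choose the tolerances $\varepsilon_j\searrow 0$ summably and so small that the open conditions ``$F$ is submersive on $B_j'$'' and ``$F(B_j')\supset L_j$'' survive the accumulated later perturbations. Since every $\overline{B_j''}$ eventually lies in the $K$'s and $\sum_j\varepsilon_j<\infty$, the sequence $F_j$ converges uniformly on compacts to a holomorphic map $F\colon S\to X$ homotopic to $f$; and because the submersion-onto-$L_j$ property is $C^1$-open while the limiting $C^1$-error on each fixed $B_j'$ is controlled (again by Cauchy estimates from the summable uniform errors on $\overline{B_j''}$), $F$ is submersive on every $B_j'$ with $F(B_j')\supset L_j$. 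As $\bigcup_j L_j=X$, this makes $F$ surjective, and for each $x\in X$ there is a preimage $p\in B_j'$ with $dF_p$ surjective, so $p\notin\mathrm{br}_F$ and $F$ is strongly dominating. I expect the bookkeeping of these tolerances, together with the homotopy and gluing that keep each $g_j$ in the class $[f]$, to be the main obstacle; the analytic input (covering submersions plus Oka approximation--interpolation) is otherwise routine.
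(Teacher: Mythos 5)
Your proposal is correct and follows essentially the same route as the paper: coordinate balls $L'_j\subset L_j$ covering $X$, local submersions obtained by composing a chart inverse with the linear projection $\C^m\to\C^n$ planted on a discrete sequence of small balls in $S$, a homotopy (using connectedness of $X$ and contractibility of the balls) to make the map agree with these submersions, and then the Oka principle with approximation applied inductively with summable tolerances so that the open conditions ``submersive on $B_j'$ and $F(B_j')\supset L'_j$'' survive in the limit. The only step you assert rather than verify is that $K_{j-1}\cup\overline{B_j''}$ is $\Oscr(S)$-convex --- disjointness alone does not give this, and the paper spends a paragraph arranging it by shrinking each ball toward its center, where the union with a single point is automatically $\Oscr(S)$-convex.
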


Theorem \ref{th:main}  answers a question that arose in author's discussion with J\"org Winkelmann
(see the Acknowledgement). The result also holds, with the same proof, if $S$ is a reduced Stein space.
A similar result in the algebraic category is given by Theorem \ref{th:algebraic}.

A version of Theorem \ref{th:main}, with $X\subset \C^n$ a non-autonomous basin of 
a sequence of attracting automorphisms with uniform bounds, is due to Forn\ae ss and Wold 
(see \cite[Theorem 1.4]{FornaessWold2014}). 
With the exception of surjectivity, the results in their theorem had been know earlier
for maps of Stein manifolds to any Oka manifold; see \cite[Theorem 7.9.1 and Corollary 7.9.3, pp. 324-325]{Forstneric2011}
for the existence of embeddings, while the existence of maps with dense images 
is a direct consequence of the fact that Oka manifolds enjoy the Oka property with interpolation 
(cf.\ \cite[Theorem 5.4.4]{Forstneric2011}).

According to the standard terminology, a holomorphic map $F\colon \C^n\to X$ 
is said to be {\em dominating} at the point $x_0=F(0)\in X$ if the differential $dF_0\colon T_0\C^n \to T_{x_0}X$  is surjective;
if such $F$ exists then $X$ is  {\em dominable at $x_0$}. A complex manifold which is dominable at every point 
is called {\em strongly dominable}. Every Oka manifold is strongly dominable, 
but the converse is not known. For a discussion of this subject, see  e.g.\ \cite{ForstnericLarusson2014}.
Theorem \ref{th:main} furnishes a map $F:\C^n\to X$ such that the family of maps $\{F\circ\phi_a\}_{a\in\C^n}$,
where $\phi_a\colon \C^n\to \C^n$ is the translation $z\mapsto z+a$, dominates at every point of $X$.

On the other hand, we do not know whether every Oka manifold $X$ is the image of a {\em locally biholomorphic} map 
$\C^n\to X$ with $n=\dim X$. A closely related problem is to decide whether locally biholomorphic self-maps of 
$\C^n$ for $n>1$ satisfy the Runge approximation theorem; see \cite[Problem 8.11.3 and Theorem 8.12.4]{Forstneric2011}.

Theorem \ref{th:main} is proved in Section \ref{sec:proofmain}. The proof is based on 
an approximation result for holomorphic maps
from Stein manifolds to Oka manifolds which we formulate in Section \ref{sec:approx1} (see Theorem \ref{th:approx1}).
The approximation takes place on a locally finite sequence of compact sets in a
Stein manifold $S$ which are separated by the level sets of a strongly
plurisubharmonic exhaustion function and satisfy certain holomorphic convexity conditions.
Although Theorem \ref{th:approx1} follows easily from the proof of the Oka principle
with approximation (see e.g.\ \cite[Chapter 5]{Forstneric2011}), this  formulation is useful in 
certain situations like the one considered here, and hence we feel it worthwhile to record it.

Theorem \ref{th:main} is motivated in part by results to the effect that certain complex 
manifolds $S$ are {\em universal sources}, in the sense that they admit a surjective holomorphic map 
$S\to X$ onto every complex manifold of the same dimension. This holds for  a polydisk and a ball in $\C^n$ 
(see Forn{\ae}ss and Stout \cite{FornaessStout1977,FornaessStout1982}; in this case, the map
can be chosen locally biholomorphic and finitely sheeted),
and also for any bounded domain with $\Cscr^2$ boundary in $\C^n$ (see L\o w \cite{Low1983}).
Further results, with emphasis on the case $X=\C^n$, were obtained
by Chen and Wang \cite{ChenWang2015}. In these results, the source manifold is Kobayashi hyperbolic. 
This condition cannot be substantially weakened since a holomorphic map 
is distance decreasing with respect to the Kobayashi pseudometrics on the respective manifolds.
In particular, a manifold with vanishing Kobayashi pseudometric (such as $\C^n$)
does not admit any nonconstant holomorphic
map to a hyperbolic manifold. Furthermore, the existence of a nondegenerate holomorphic map $\C^n\to X$ 
to a connected compact complex manifold $X$ of dimension $n$ implies that $X$ is not of general type 
(see Kodaira \cite{Kodaira1971} and Kobayashi and Ochiai \cite{KobayashiOchiai1975}).
By an extension of the Kobayashi-Ochiai argument, Campana proved that
such $X$ is actually {\em special} \cite[Corollary 8.11]{Campana2004}.
Special manifolds are important in Campana's structure theory of compact K\"ahler manifolds.
Recently, Diverio and Trapani \cite{DiverioTrapani2016} and Wu and Yau \cite{WuYau2016P,WuYau2016IM} 
proved that a compact connected complex manifold $X$, which admits a K\"ahler metric whose 
holomorphic sectional curvature is everywhere nonpositive and is strictly negative at least at one point, 
has positive canonical bundle $K_X$.
(See also Tosatti and Yang \cite{TosattiYang2016} and Nomura \cite{Nomura2016}.)
Hence, such $X$ is projective and of general type, and therefore it does not admit any 
nondegenerate holomorphic map $\C^n\to X$ with $n=\dim X$.

These observations justify the hypothesis in Theorem \ref{th:main} that $X$ be an Oka manifold.

Let us recall a related but weaker holomorphic flexibility property introduced by Gromov \cite{Gromov1989}.
A complex manifold $X$ is said to enjoy the {\em basic Oka property}, BOP, 
if every continuous map $S\to X$ from a Stein manifold $S$ is homotopic to a holomorphic map. 
The only difference with respect to the class of Oka manifolds is that the BOP axiom does 
not include any approximation or interpolation conditions. Thus, every Oka manifold satisfies BOP, 
but the converse fails e.g.\ for contractible hyperbolic manifolds (such as bounded convex domains in $\C^n$). 
The basic Oka property property was studied  by Winkelmann \cite{Winkelmann1993} for maps between 
Riemann surfaces, and by Campana and Winkelmann \cite{CampanaWinkelmann2015} for more general 
complex manifolds. (Their use of the term {\em homotopy principle} is equivalent to BOP.)
Recently, Campana and Winkelmann proved (see \cite{Winkelmann2016}) that a compact projective manifold 
satisfying BOP is special in the sense of  \cite{Campana2004}. We thus have the implications 
\[
     \text{${\rm Oka} \Longrightarrow {\rm BOP} \Longrightarrow  {\rm special}$},
\]
where the second one holds for compact projective manifolds (and is expected to be true for all
compact K\"ahler manifolds).

Concerning the relationship between Oka manifolds and manifolds with BOP, one has the feeling that these 
two classes  are essentially the same after eliminating the obvious counterexamples provided by contractible 
hyperbolic manifolds; the latter may be used as building blocks in manifolds with BOP, but not in Oka manifolds. 
With this in mind, we propose the following Oka property.

%
%
\begin{definition}
A connected complex manifold $X$ satisfies the {\em basic Oka property with surjectivity}, abbreviated BOPS,
if  every continuous map $f:S\to X$ from a Stein manifold $S$ with $\dim S\ge \dim X$ is 
homotopic to a {\em surjective} holomorphic map $F:S\to X$. 
\end{definition}

Theorem \ref{th:main} says that  ${\rm Oka}\Rightarrow{\rm BOPS}$.
Applying the BOPS axiom to a constant map $\C^n\to x_0\in X$ gives the following observation.

\begin{proposition}
A connected complex manifold $X$ satisfying {\rm BOPS} admits a surjective holomorphic map 
$\C^n\to X$ with $n=\dim X$. In particular, the Kobayashi pseudometric of a 
complex manifold satisfying {\rm BOPS} vanishes identically.
\end{proposition}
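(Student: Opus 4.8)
The plan is to read off both assertions directly from the BOPS axiom together with standard properties of the Kobayashi pseudometric. For the first assertion I would take $S=\C^n$ with $n=\dim X$. This is a Stein manifold, and the dimension hypothesis $\dim S\ge\dim X$ in the definition of BOPS is satisfied (with equality). Since $X$ is nonempty, fix a point $x_0\in X$ and consider the constant map $f\colon\C^n\to X$, $f\equiv x_0$; this is continuous, so BOPS provides a \emph{surjective} holomorphic map $F\colon\C^n\to X$ homotopic to $f$. The homotopy itself plays no role here: all I retain is the existence of the surjective holomorphic map $F$, which is exactly the first claim.

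For the second assertion I would combine two elementary facts. Writing $k_M$ for the Kobayashi pseudodistance on a complex manifold $M$, the first is the distance-decreasing property: every holomorphic map $G\colon M\to N$ satisfies $k_N(G(p),G(q))\le k_M(p,q)$ for all $p,q\in M$ (this was already invoked in the Introduction). The second is that $k_{\C^n}\equiv 0$, since any two points of $\C^n$ lie in the image of a single affine complex line $\C\to\C^n$ and $k_{\C}\equiv 0$. Now let $F\colon\C^n\to X$ be the surjective holomorphic map produced above and fix arbitrary points $x,y\in X$. By surjectivity choose $p,q\in\C^n$ with $F(p)=x$ and $F(q)=y$; then
\[
    k_X(x,y)\le k_{\C^n}(p,q)=0.
\]
As $x,y\in X$ were arbitrary, $k_X\equiv 0$, which is the desired vanishing.

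I do not expect any genuine obstacle: both statements follow at once from the axiom and from textbook properties of the Kobayashi pseudometric, which is presumably why this is recorded as an immediate consequence of the definition. The only point worth flagging is that the argument uses \emph{only} surjectivity of $F$, with no condition on the differential $dF$; thus the full strength of Theorem \ref{th:main} (strong domination) is not needed for this corollary, and BOPS in its bare form already forces the Kobayashi pseudometric to vanish identically.
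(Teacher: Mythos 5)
Your proof is correct and matches the paper's approach exactly: the paper simply notes that the proposition follows by "applying the BOPS axiom to a constant map $\C^n\to x_0\in X$," and your argument fills in precisely that step together with the standard distance-decreasing property of the Kobayashi pseudodistance. Your closing remark that only surjectivity (not strong domination) is needed is also consistent with how the paper uses the result.
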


Since the BOPS axiom eliminates the obvious counterexamples to the (false) implication
BOP $\Rightarrow$ Oka, the following seems a reasonable questions.

%
%
\begin{problem}\label{prob:BOP-Oka}
(a) Assuming that a complex manifold $X$ satisfies BOPS, does it follow that $X$ is an Oka manifold?
That is, do we have the implication ${\rm BOPS} \Rightarrow {\rm  Oka}$?

(b) Do the properties BOP and BOPS coincide in the class of compact (or compact K\"ahler,
or compact projective) manifolds?
\end{problem}

Let us mention another question related to Theorem \ref{th:main}. Let $\B^n$ denote the open ball in $\C^n$. 
It is an open problem whether $\C^n\setminus \overline\B^n$ is an Oka manifold
when $n>1$.

\begin{problem}\label{prob:ball}
Let $n>1$. Does there exist a surjective holomorphic map $\C^n\to \C^n\setminus \overline\B^n$?
\end{problem}

In this connection, we mention that Dixon and Esterle (see \cite[Theorem 8.13, p.\ 182]{DixonEsterle1986})
constructed for every $\epsilon>0$ a finitely sheeted holomorphic map $f\colon \C^2\to \C^2$ 
whose image avoids the closed unit ball $\overline \B^2$ but contains the complement of the ball of 
radius $1+\epsilon$: $\C^2\setminus (1+\epsilon)\overline \B^2 \subset f(\C^2) \subset \C^2\setminus\overline \B^2$.

Theorem \ref{th:main} shows that a negative answer to Problem \ref{prob:ball} would imply that 
$\C^n\setminus \overline\B^n$ fails to be Oka. 
Since $\C^n\setminus \overline\B^n$ is a union of Fatou-Bieberbach domains 
(obtained for example as attracting basins of holomorphic automorphisms of $\C^n$ 
which map the ball $\B^n$ into itself), this would provide an example of a strongly dominable manifold 
which is not Oka. 

The above example is also connected to the open problem whether every Oka manifold is elliptic or subelliptic,
the latter being the main known geometric conditions implying all versions of the Oka property 
(see Gromov \cite{Gromov1989}, Forstneri\v c \cite{Forstneric2002MZ},
and \cite[Definition 5.5.11 (d) and Corollary 5.5.12]{Forstneric2011}).  
The following implications hold for any complex manifold:
\[
	{\rm homogeneous} \Longrightarrow  {\rm elliptic} \Longrightarrow {\rm subelliptic} 
	\Longrightarrow {\rm Oka} \Longrightarrow {\rm strongly\ dominable}.
\]
It was shown by Andrist et al \cite{Andristetal2016} that $\C^n\setminus \overline\B^n$
is not subelliptic when $n\ge 3$. Since $\C^n\setminus \overline\B^n$ is strongly dominable,
at least one of the two right-most implications  cannot be reversed.
Therefore, the question whether $\C^3\setminus\overline\B^3$ is an Oka manifold is of particular interest.

It is natural to look for an analogue of Theorem \ref{th:main} in the algebraic category. 
At this time, we do not have a good notion of an algebraic Oka manifold. 
However, a useful geometric condition on an algebraic manifold $X$, which gives
the approximation of certain holomorphic maps $S\to X$ from affine algebraic manifolds
$S$ by algebraic morphisms $S\to X$, is {\em algebraic subellipticity}; see \cite[Definition 5.5.11 (e)]{Forstneric2011}
or Section \ref{sec:algebraic} below. (We emphasize that all algebraic maps 
in this paper are understood to be morphisms, i.e., without singularities.)
In Section \ref{sec:algebraic} we prove the following result in this direction. 

%
%
\begin{theorem}\label{th:algebraic}
Assume that $X$ is a compact algebraically subelliptic manifold and 
$S$ is an affine algebraic manifold such that $\dim S\ge \dim X$. Then, every
algebraic map $S\to X$ is homotopic (through algebraic maps) 
to a surjective strongly dominating algebraic map $S\to X$. In particular, $X$ admits a 
surjective strongly dominating algebraic map $F\colon \C^n\to X$ with $n=\dim X$.
\end{theorem}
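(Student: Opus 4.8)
The plan is to follow the strategy of the holomorphic case, replacing the approximation result Theorem~\ref{th:approx1} by the algebraic Oka principle for maps from affine algebraic manifolds into algebraically subelliptic manifolds. That principle provides approximation on compact sets together with interpolation (including prescribed $1$-jets) along finite subvarieties, all within a fixed homotopy class and through algebraic maps; see \cite[Chapter 5]{Forstneric2011} and Section~\ref{sec:algebraic}. All deformations will be realized by composing with the algebraic sprays furnished by subellipticity, so that every intermediate map is automatically algebraic and homotopic to its predecessor through the linear scaling of the spray parameter.

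First I would record the geometric input. Let $s_1,\dots,s_m$ be a finite dominating family of algebraic sprays on $X$, and let $\mathbf s\colon \mathbf E\to X$ be their composition, an algebraic spray whose domain is the total space of an algebraic bundle with affine fibres over $X$ and which is submersive along the zero section. Since $X$ is compact and connected, a standard reachability argument shows that, after replacing $\mathbf s$ by a sufficiently high iterated composition, one may assume in addition that for every $x\in X$ the fibre map $\mathbf s|_{\mathbf E_x}\colon \mathbf E_x\to X$ is surjective; its differential is onto $T_xX$ at the origin, so this fibre map is dominant and hence submersive over a Zariski-dense open subset of $X$. This single property is what lets us reach an arbitrary, possibly distant, point of $X$ from any given point of $S$.

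The core of the argument is a descending induction on the dimension of the \emph{uncovered locus} $Z(F):=X\setminus F(S\setminus \mathrm{br}_F)$, the complement of the image of the regular (submersive) locus of the current algebraic map $F$. Note that $Z(F)$ is closed, hence compact, and that $F$ is strongly dominating precisely when $Z(F)=\varnothing$. Starting from the given $f$ (for which $Z(f)$ may be all of $X$), suppose $Z:=Z(F)\ne\varnothing$ and pick a smooth point $z$ of a top-dimensional component of $Z$. Using $\dim S\ge\dim X$ and the surjectivity of $\mathbf s|_{\mathbf E_{F(p)}}$, choose a point $p\in S$ and a value $v\in\mathbf E_{F(p)}$ with $\mathbf s(v)=z$ at which $\mathbf s$ is submersive, together with a section $\sigma$ of $F^{*}\mathbf E$ with $\sigma(p)=v$ whose $1$-jet at $p$ makes $d(\mathbf s\circ\sigma)_p\colon T_pS\to T_zX$ surjective. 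The algebraic Oka principle then produces an algebraic map $F'$, homotopic to $F$ through algebraic maps, which agrees with $\mathbf s(\sigma(\cdot))$ to first order at $p$ and approximates $F$, together with its $1$-jet, uniformly on a fixed compact set containing the finitely many points used at earlier stages. Since $F'$ is submersive at $p$, it maps a neighbourhood of $p$ onto an open neighbourhood $O_z\subseteq X$ of $z$, so $O_z\cap Z(F')=\varnothing$; and because the approximation is $C^1$-close near the previous witness points, the open subsets of $X$ covered at earlier stages remain covered. As $z$ ranges over the compact top stratum of $Z$, finitely many neighbourhoods $O_z$ cover it, so after finitely many steps the top-dimensional part of the uncovered locus is eliminated and $\dim Z$ drops. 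The induction terminates with $Z=\varnothing$, and concatenating the finitely many algebraic homotopies yields the desired homotopy from $f$ to a surjective strongly dominating algebraic map $F$. The final assertion follows by taking $S=\C^n$ and $f$ a constant map.

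The main obstacle is the inductive step, where one must simultaneously (i) create a submersive preimage of a point $z$ that may lie far from the current image and in a component of $Z$ of arbitrary birational type, and (ii) preserve all of the open subsets of $X$ already covered. For (i) it is essential to use the fibrewise surjective composed spray $\mathbf s$, which realizes $X$ as a union of images of affine spaces and thus makes every point of $X$ reachable from every point of $S$; the decisive simplification is that one only ever needs to hit \emph{single} points of $Z$ submersively, since each such hit automatically covers a full-dimensional open neighbourhood in $X$, so no dominating morphism onto a whole (possibly non-rational) stratum is required. Point (ii) is exactly the content of the combined approximation-and-interpolation form of the algebraic Oka principle, together with the openness of the submersion condition on the compact manifold $X$; here it is crucial that the entire process is finite, so that only finitely many $1$-jets must be fixed and the approximations may be carried out on a single compact set exhausting the relevant witnesses.
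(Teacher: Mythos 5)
Your overall strategy (an inductive ``chipping away'' at the uncovered locus $Z(F)=X\setminus F(S\setminus\mathrm{br}_F)$ using iterated sprays for reachability) is genuinely different from the paper's, and it has a gap at its core: the termination argument. For the dimension of $Z$ to drop you need $Z(F')\subseteq Z(F)\setminus O_z$, i.e.\ every point of $X$ covered submersively by $F$ must remain covered by $F'$. But the algebraic approximation theorem only controls $F'$ on a \emph{compact} subset of the affine manifold $S$, while the witnesses for the open set $F(S\setminus\mathrm{br}_F)$ may escape every compact set (points near the boundary of the covered region typically have no witnesses in any fixed compact). Your fallback --- preserving only the finitely many witness points from earlier stages --- gives $Z(F_{i+1})\subseteq X\setminus(O'_{z_1}\cup\cdots\cup O'_{z_{i+1}})$, which places no bound on $\dim Z(F_{i+1})$ in terms of $\dim Z(F_i)$; the modification can uncover large previously covered regions. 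The companion claim that ``finitely many neighbourhoods $O_z$ cover the top stratum'' is also circular, since each $O_z$ is only determined \emph{after} the corresponding modification and its size is not controlled in advance. Two further points need justification: the theorem actually available here (Theorem~\ref{th:hRunge-algebraic}) provides approximation of a homotopy of holomorphic maps starting from an algebraic map, with \emph{no} jet interpolation, so your appeal to prescribed $1$-jets in the algebraic category is not covered by the quoted result (though it can be circumvented by Cauchy estimates); and for a fixed $z$ the fibre map $\mathbf s|_{\mathbf E_{F(p)}}$ of the iterated spray need not be submersive over $z$, only over a dense constructible subset of $X$.

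The paper's proof avoids all of this by exploiting compactness of $X$ in one shot: cover $X$ by finitely many coordinate balls $L'_j\subset L_j$, choose finitely many pairwise disjoint balls $K_j$ in $S$ with $\Oscr(S)$-convex union, write down explicit local biholomorphisms (composed with a linear projection when $\dim S>\dim X$) $g_j$ taking $K_j$ onto $L_j$, connect the given algebraic map to the $g_j$ by a homotopy of holomorphic maps on a neighborhood of $\cup_j K_j$ (possible because each $K_j$ has a contractible Stein neighborhood), and apply Theorem~\ref{th:hRunge-algebraic} \emph{once}. A Rouch\'e-type argument then gives $L'_j\subset F(K'_j)$ for all $j$ simultaneously, hence surjectivity, and maximal rank on each $K'_j$ gives strong domination. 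No induction, no reachability, and no interpolation are needed. If you want to salvage your approach you would have to prescribe, in advance, a single compact set in $S$ and a single finite family of target neighbourhoods whose union is $X$ --- at which point you have essentially rediscovered the paper's argument.
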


The proof of Theorem \ref{th:algebraic} is a based on Theorem \ref{th:hRunge-algebraic} which
is  taken from \cite{Forstneric2006AJM}. It says  in particular  that,
given an affine algebraic manifold $S$ and  an algebraically subelliptic manifold $X$,
a holomorphic map $S\to X$ that is homotopic to an algebraic map through a family of 
holomorphic maps can be approximated   
by algebraic maps $S\to X$.

\begin{example}
Let $X$ be an algebraic manifold of dimension $n$ which is covered by Zariski open sets that are
biregularly isomorphic to $\C^n$. 
Then $X$ is algebraically subelliptic (see \cite[Definition 6.4.5 and Proposition 6.4.6]{Forstneric2011}). 
Furthermore, the total space $Y$ of any blow-up $Y\to X$ along a closed (not necessarily connected) submanifold of $X$
is also algebraically subelliptic according to L\'arusson and Truong \cite{LarussonTruong2016}. 
If $Y$ is compact, then Theorem \ref{th:algebraic} furnishes a strongly dominating morphisms $\C^n\to Y$. 
This holds for example if $Y$ is obtained by blowing up a projective space or a Grassmanian 
along a compact submanifold. 
\end{example}


\section{Approximation of maps from a Stein manifold to an Oka manifold on a sequence of Stein compacts}  \label{sec:approx1} 

We denote by $\Oscr(S)$ the algebra of all holomorphic functions on a complex manifold $S$,
endowed with the compact-open topology. Recall that a compact set $K$ in $S$ is said to be 
{\em $\Oscr(S)$-convex} if $K=\wh K_{\Oscr(S)}$, where the holomorphic hull of $K$ is defined by
\[
	\wh K_{\Oscr(S)} = \bigl\{p\in S: |f(p)|\le \sup_K|f|\ \ \forall f\in\Oscr(S)\bigr\}.
\]
In this section, we prove the following approximation result. In the next section, we will apply 
it to prove Theorem \ref{th:main}. 


\begin{theorem} \label{th:approx1}
Let $S$ be a reduced Stein space and $(K_j)_{j=1}^\infty$ be a sequence
of compact pairwise disjoint subsets of $S$ satisfying the following properties:
\begin{itemize}
\item[\rm (a)] Every compact set in $S$ intersects at most finitely many of the sets $K_j$.
\vspace{1mm}
\item[\rm (b)] The union $\cup_{j=1}^k K_j$ is $\Oscr(S)$-convex for each $k\in\N$.
\vspace{1mm}
\item[\rm (c)]  Set $K=\cup_{j=1}^\infty K_j$. 
There exist a strongly plurisubharmonic exhaustion function $\rho\colon S\to\R_+=[0,+\infty)$
and an increasing sequence $0< a_1<a_2<\ldots$ with $\lim_{j\to\infty}a_j=+\infty$ 
such that for every $j\in \N$ we have $K\cap\{\rho=a_j\}=\emptyset$ and 
\begin{equation}\label{eq:M-j}
	  \text{the compact set $M_j := \{\rho\le a_j\} \cup (K\cap \{\rho\le a_{j+1}\})$ is $\Oscr(S)$-convex.} 
\end{equation}
\end{itemize}
Let $X$ be an Oka manifold, and let $f\colon S\to X$ be a continuous map which is holomorphic
on a neighborhood of the set $K=\cup_{j=1}^\infty K_j$. Let $\dist$ be a distance function
on $X$ inducing the manifold topology. Given a sequence $\epsilon_j>0$ $(j\in\N)$, there
exists a holomorphic map $F\colon S\to X$, homotopic to $f$ by a family of maps 
$F_t\colon S\to X$ $(t\in[0,1])$ that are holomorphic on a neighborhood of $K$, such that
\begin{equation}\label{eq:estimate1}
	\sup_{p \in K_j}\dist(f(p),F(p)) <\epsilon_j\quad \text{for all}\ j=1,2,\ldots.
\end{equation}
Furthermore, given a discrete sequence of points $(p_j)_{j\in\N} \subset K$
and integers $k_j\in \N$, we can choose $F$ to agree with $f$ to order $k_j$ at $p_j$.
\end{theorem}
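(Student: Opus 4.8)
The plan is to go inside the inductive proof of the Oka principle with approximation and interpolation (\cite[proof of Theorem 5.4.4]{Forstneric2011}) and run it along the exhaustion of $S$ by the $\Oscr(S)$-convex compacts $M_j$ of \eqref{eq:M-j}, rather than along a plain exhaustion by sublevel sets of $\rho$. I would construct holomorphic maps $F_j$, each holomorphic on a neighborhood of $M_j$ (with $F_0:=f$ and $M_0:=\emptyset$), homotopic to $f$ through maps holomorphic near $K$, together with a summable sequence $\delta_j>0$ and tolerances $\eta_j>0$, arranged so that $\dist(F_{j+1},F_j)<\delta_j$ on $M_j$ and $\dist(F_j,f)<\eta_j$ on $K\cap\{\rho\le a_{j+1}\}$. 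Because $K\cap\{\rho=a_j\}=\emptyset$ and $a_j\to+\infty$, each set $K_i$ satisfies $K_i\subset K\cap\{\rho\le a_{j+1}\}\subset M_j$ for every $j\ge m_i$, where $m_i$ is the least index with $\max_{K_i}\rho<a_{m_i+1}$; on $K_i$ one then has $\dist(F,f)\le\dist(F_{m_i},f)+\sum_{j\ge m_i}\delta_j<\eta_{m_i}+\sum_{j\ge m_i}\delta_j$, so choosing the $\eta_j$ and $\delta_j$ small enough relative to the $\epsilon_i$ gives \eqref{eq:estimate1}. As the $M_j$ exhaust $S$ and the $\delta_j$ are summable, the $F_j$ converge uniformly on compact subsets of $S$ to a holomorphic limit $F$.

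The inductive step, passing from $F_j$ to $F_{j+1}$, is the heart of the matter and the part I expect to be the main obstacle. Going from $M_j$ to $M_{j+1}$ adjoins the shell $\{a_j\le\rho\le a_{j+1}\}$ and the protruding piece $K\cap\{a_{j+1}\le\rho\le a_{j+2}\}$ of $K$. I would realise this as a single layer-crossing: take $A$ to be a neighborhood of $M_j$, on which $F_j$ is holomorphic, and $B$ a neighborhood of the adjoined region, on which I use $f$ over its $K$-part (where $f$ is holomorphic) and, over the rest, a holomorphic map obtained by crossing the shell as in the noncritical and handle-attachment steps of the Oka-principle induction, both of which rely on CAP. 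The crucial point is that $K\cap\{\rho=a_{j+1}\}=\emptyset$ makes the protruding piece disjoint from $M_j$, so $f$ may be used freely on $B$ without clashing with $F_j$; on the remaining overlap $A\cap B$ the two maps are uniformly close, since $F_j$ approximates $f$ on $K\cap M_j$ by the inductive hypothesis. The gluing (splitting) lemma for maps into Oka manifolds \cite[Chapter 5]{Forstneric2011}, which again uses CAP, then merges them into a single holomorphic map on a neighborhood of $M_{j+1}$, with control of the error on $A$ and on $K\cap\{\rho\le a_{j+2}\}$; the $\Oscr(S)$-convexity of $M_j$ and $M_{j+1}$ from hypotheses (b) and (c) is exactly what is needed to set up $(A,B)$ as an admissible Cartan pair and to carry out the crossing.

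Interpolation at the discrete sequence $(p_i)$ I would carry along by performing every crossing step in its interpolatory form on the closed $0$-dimensional subvariety $A_0=\{p_i:i\in\N\}$: since $f$ is holomorphic near $K\supset A_0$, at the stage producing $F_{j+1}$ I impose that it agree with $f$ to order $k_i$ at each $p_i\in M_{j+1}$, which the Oka principle with interpolation permits simultaneously with the approximation on $M_{j+1}$. Once a point $p_i$ has been enclosed, its jet is then fixed exactly at every later stage, so the limit $F$ realises the prescribed jets exactly rather than approximately. Finally, the homotopies supplied at the successive stages, each through maps holomorphic near $K$ and fixing the jets on $A_0$, are concatenated on a sequence of subintervals exhausting $[0,1)$; the same summability of the errors that yields convergence of the $F_j$ ensures that the concatenation converges to a homotopy $F_t$ from $f$ to $F$ through maps holomorphic near $K$, which completes the construction.
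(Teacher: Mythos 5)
Your proposal is correct and follows essentially the same route as the paper: a recursive construction of maps $F_j$ holomorphic near the $\Oscr(S)$-convex sets $M_j$, with errors made summable relative to the $\epsilon_i$ via the finiteness hypothesis (a), the homotopies concatenated on subintervals of $[0,1]$, and the jets at the $p_i$ fixed once enclosed. The only difference is one of packaging: where you unwind the Cartan-pair/gluing machinery of the layer crossing at each stage, the paper simply invokes \cite[Theorem 5.4.4]{Forstneric2011} as a black box on the pair $(F_j,M_j)$ and uses a cut-off in the homotopy parameter to keep the modification supported in $\{\rho<c_{j+1}\}$, so that $F_{j+1}$ remains equal to $f$ on the outer pieces of $K$.
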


\begin{proof}
We may assume that $\dist$ is a complete metric on $X$ and that $\sum_j \epsilon_j<\infty$. 
Let $(a_j)_{j\in \N}$ be the sequence of real numbers in condition (c). Set
\[
	S_j:=\{p\in S: \rho(p)\le a_j\},\quad  A_{j}:=\{p\in S: a_j\le \rho(p) \le a_{j+1}\},\quad j\in\N.
\]
Note that $S_j$ is compact $\Oscr(S)$-convex, and we have 
\[
	S_{j+1}=S_j \cup A_j \ \ \text{and} \ \  M_j = S_j\cup  (K\cap A_j)\ \ 
	\text{for every}\  j=1,2,\ldots.
\]
(Recall that $K=\cup_{j=1}^\infty K_j$.) For consistency of notation we also set 
\[	
	S_0=\emptyset,\quad M_0:=K\cap S_1,\quad F_0=f. 
\]
By hypothesis (c), we have that $K\cap bS_j=\emptyset$ for all $j\in \N$. 
Furthermore, condition  (a) in the theorem implies that each set $S_j$ contains at most finitely 
many of the sets $K_i$. Set
\begin{equation}\label{eq:eta-j}
	\eta_j:=\min\{\epsilon_i : K_i \subset S_j\}>0, \quad j=1,2,\ldots.
\end{equation}

To prove the theorem, we shall construct sequences of continuous maps $F_j\colon S\to X$, homotopies 
$F_{j,t}\colon S\to X$ $(t\in [0,1])$, and numbers $b_j,c_j>0$ satisfying the following conditions for every $j\in\N$:
\begin{itemize}
\item[\rm (i$_j$)]   $a_j<b_j <c_j <a_{j+1}$ and $K\cap A_j\subset \{c_j <\rho <a_{j+1}\}$.
\vspace{1mm}
\item[\rm (ii$_j$)]   $F_j$ is holomorphic on $\{\rho<b_j\}$ and $F_j=F_{j-1}$ on $\{\rho\ge c_j\}$.
\vspace{1mm}
\item[\rm (iii$_j$)]  $\dist(F_j(p),F_{j-1}(p)) < 2^{-j}\eta_j$ for every $p\in M_{j-1}$. 
\vspace{1mm}
\item[\rm (iv$_j$)]  $F_{j,0}=F_{j-1}$ and $F_{j,1}=F_j$.
\vspace{1mm}
\item[\rm (v$_j$)]  For every $t\in[0,1]$ the map $F_{j,t}$ is holomorphic on a neighborhood of $M_{j-1}$ and
$F_{j,t}=F_{j-1}$ holds on $\{\rho\ge c_j\}$.
\vspace{1mm}
\item[\rm (vi$_j$)]  $\dist(F_{j,t}(p),F_{j-1}(p)) < 2^{-j}\eta_j$ for every $p\in M_{j-1}$ and $t\in[0,1]$.
\end{itemize}
We could also add a suitable condition on $F_j$ to ensure jet interpolation along a discrete sequence 
$(p_j)\subset K$ (see the last sentence in the theorem). Since this interpolation
is a trivial addition in what follows, we shall delete it to simplify the exposition.

A sequence of maps and homotopies satisfying these properties can be constructed recursively
by using \cite[Theorem 5.4.4]{Forstneric2011} at every step; we offer some details.

Assume that maps $F_0,F_1,\ldots,F_j$ and homotopies $F_{1,t},\ldots, F_{j,t}$
with these properties have been found for some $j\in\N$. 
(Recall that $F_0=f$.) In view of property (ii$_j$) the map $F_j$ is 
holomorphic on the set $\{\rho<b_j\}$, and we have
$F_j=F_{j-1}=\cdots=F_0$ on $\{\rho\ge c_j\}$. Since $K\cap A_j\subset \{c_j <\rho <a_{j+1}\}$
by property (i$_j$), it follows that $F_j$ is holomorphic on a neighborhood of the set $M_j$ \eqref{eq:M-j}. 
Since $M_j$ is $\Oscr(S)$-convex,  we can apply \cite[Theorem 5.4.4]{Forstneric2011} fo find
a number $c_{j+1}>a_{j+1}$ close to $a_{j+1}$, a holomorphic map 
$F_{j+1}\colon \{\rho<c_{j+1}\}\to X$ satisfying property (iii$_{j+1}$), and  
a homotopy of maps $F_{j+1,t}\colon \{\rho<c_j\}\to X$ $(t\in [0,1])$ satisfying properties 
(iv$_{j+1}$) and (vi$_{j+1}$). 
It remains to extend this homotopy to all of $S$ such that  condition (v$_{j+1}$) holds as well. 
This is accomplished by using a cut-off function in the parameter of
the homotopy.  Explicitly, pick a number $b_{j+1}$ such that $a_{j+1}<b_{j+1} <c_{j+1}$,
and let $\chi\colon S\to [0,1]$ be a continuous function which equals $1$ on the set $\{\rho\le b_{j+1}\}$
and has support contained in $\{\rho < c_{j+1}\}$. The homotopy of continuous maps
\[
	(p,t) \longmapsto F_{j+1, \chi(p)t}(p) \in X,\quad p\in S,\ t\in[0,1]
\]
then agrees with the homotopy $F_{j+1,t}$ on the set $\{\rho\le b_{j+1}\}$ (since $\chi=1$ there), 
and it agrees with the map $F_{j}$ (and hence with $F_0=f$) on $\{\rho \ge c_{j+1}\}$ since $\chi$
vanishes there. This established the condition (v$_{j+1}$)  and completes the induction step.

In view of  (iii$_j$) and the definition of the numbers $\eta_j$ \eqref{eq:eta-j}, 
the sequence $F_j\colon S\to X$  converges uniformly on compacts in $S$ to a 
holomorphic map $F=\lim_{j\to\infty} F_j\colon S\to X$ satisfying  the estimates \eqref{eq:estimate1}. 
Furthermore, conditions (iv$_{j+1}$)--(vi$_{j+1}$) imply that the sequence of homotopies 
$F_{j,t}\colon S\to X$ $(j\in \N)$ can be assembled into a
homotopy $F_t\colon S\to X$ $(t\in [0,1])$ connecting $F_0=f$ to the final 
holomorphic map $F_1=F$ such that $F_t$ is holomorphic on a neighborhood of the set $K$
for every $t\in[0,1]$ and every map $F_t$ in the homotopy satisfies the estimates \eqref{eq:estimate1}. 
This assembling is accomplished by writing $[0,1)=\cup_{j=1}^\infty I_j$, where $I_j=[1-2^{-j+1},1- 2^{-j}]$,
and placing the homotopy $(F_{j,t})_{t\in [0,1]}$ onto the subinterval $I_j\subset [0,1]$ 
by suitably reparametrizing the $t$-variable. 
\end{proof}

\section{Construction of surjective holomorphic maps to Oka manifolds}  \label{sec:proofmain} 

\begin{proof}[Proof of Theorem \ref{th:main}]
Let $X$ be a complex manifold of dimension $n$.
Choose a countable family of compact sets $L'_j \subset L_j\subset X$ $(j\in\N)$  satisfying the following conditions:
\begin{itemize}
\item[\rm (i)]  $L'_j\subset\mathring L_j$ for every $j\in\N$.
\vspace{1mm}
\item[\rm (ii)]   $\cup_{j=1}^\infty L'_j=X$.
\vspace{1mm}
\item[\rm (iii)] For every $j\in\N$ there are an open set $V_j\subset X$ containing $L_j$ and a biholomorphic 
map $\psi_j\colon V_j\to \phi_j(V_j)\subset \C^n$ such that $\psi_j(L_j)=\overline \B^n$ is the closed unit ball in $\C^n$.    
\end{itemize}
A compact set $L_j\subset X$ satisfying condition (iii) will be called a (closed) {\em ball} in $X$. 
If the manifold $X$ is compact, then we can cover it by a finite family of such balls.

Let $S$ be a Stein manifold of dimension $m=\dim S\ge n$. Choose a smooth strongly plurisubharmonic 
exhaustion function $\rho\colon S\to\R_+=[0,+\infty)$. Pick an increasing  sequence of 
real numbers $a_j>0$ with $\lim_{j\to\infty} a_j=+\infty$. 
For each $j\in\N$ we choose a small $\Oscr(S)$-convex ball $K_j$ in $S$ such that 
\begin{equation}\label{eq:incl}
	K_j\subset \{p\in S\colon a_j < \rho(p) < a_{j+1}\} 
\end{equation}
and 
\begin{equation}\label{eq:holoconvex}
 \text{the compact set $M_j:=K_j\cup \{\rho\le a_j\}$ is $\Oscr(S)$-convex.}
\end{equation}
The last condition can be achieved by taking the balls $K_j$ small enough; here 
is an explanation. By the assumption, there are an open set $U_j\subset S$ containing $K_j$ 
and a biholomorphic coordinate map $\phi_j\colon U_j\to \phi_j(U_j)\subset \C^m$ such that
$\phi_j(K_j)=\overline \B^m\subset\C^m$. In view of \eqref{eq:incl}
we may assume that $\overline U_j \cap \{\rho\le a_j\}=\emptyset$.
Let $p_j:=\phi_j^{-1}(0)\in K_j$ be the center of $K_j$. 
The compact set $\{\rho\le a_j\}\cup  \{p_j\}$ is clearly $\Oscr(S)$-convex, and hence it has a 
basis of compact $\Oscr(S)$-convex neighborhoods. In particular, there is a compact 
neighborhood $T \subset U_j$ of the point $p_j$ such that $T \cup \{\rho\le a_j\}$ is $\Oscr(S)$-convex.
Choose a number $0<r_j<1$ small enough such that $r_j\overline \B^m \subset \phi_j(T)$.
The ball $K'_j:=\phi_j^{-1}(r_j\overline \B^m)$ is then contained in $T$ and is $\Oscr(T)$-convex.
Hence, the set $K'_j\cup \{\rho\le a_j\}$ is $\Oscr(S)$-convex. Replacing $K_j$ 
by $K'_j$ and rescaling the coordinate map $\phi_j$ accordingly
so that it takes this set onto $\overline\B^m$, condition \eqref{eq:holoconvex} is satisfied.

Denote by $\pi\colon \C^m\to \C^n$ the coordinate projection $(z_1,\ldots,z_m)\mapsto (z_1,\ldots,z_n)$.
(Recall that $m\ge n$.)  Then $\pi(\overline \B^m)=\overline\B^n$. Let $U_j\supset K_j$
and $\phi_j\colon U_j\to \phi_j(U_j)\subset \C^m$ be as above. There is an open neighborhood 
$U'_j\subset S$ of $K_j$, with $U'_j\subset U_j$, such that the map 
\[
	f_j=\psi^{-1}\circ \pi \circ\phi_j   \colon U'_j\to V_j\subset X
\]
is a well defined holomorphic submersion satisfying $f_j(K_j)=L_j$ for every $j\in\N$.
 
Since the set $L'_j$ is contained in the interior of $L_j$ and $f_j$ is a submersion, there is a  compact set 
$K'_j$ contained in the interior of $K_j$ such that $L'_j\subset f_j(\mathring K'_j)$.
By Rouch\'e's theorem we can choose $\epsilon_j>0$ small enough such that for every holomorphic map 
$F \colon K_j\to X$ defined on a neighborhood of $K_j$ we have that
\begin{equation}\label{eq:Rouche}
	\sup_{p\in K_j}\dist(f_j(p),F(p))<\epsilon_j  \ \Longrightarrow\  L'_j \subset F(K'_j).
\end{equation}

Let $f\colon S\to X$ be a continuous map. By a homotopic deformation of $f$, supported on a 
contractible neighborhood of the ball $K_j\subset S$ for each $j$, 
we can arrange that $f=f_j$ on a neighborhood of $K_j$ for each $j\in \N$. 
The homotopy is kept fixed outside a somewhat bigger neighborhood of each $K_j$ in $S$,
and these neighborhoods are chosen to have pairwise disjoint closures. 
We denote the new map by the same letter $f$.

Theorem \ref{th:approx1}, applied to the map $f$ and the sequences $K_j$ and $\epsilon_j$,
furnishes a holomorphic map $F\colon S\to X$ that is homotopic to $f$ and satisfies the estimate
\[	
	\sup_{p \in K_j}\dist(f(p),F(p)) <\epsilon_j, \quad  j=1,2,\ldots
\]
(see \eqref{eq:estimate1}). By the choice of $\epsilon_j$ \eqref{eq:Rouche} it follows that 
$L'_j \subset  F(K'_j)$ for each $j\in \N$, and hence 
\[
	F(S)= \cup_{j=1}^\infty F(K'_j) = \cup_{j=1}^\infty  L'_j=X. 
\]
Furthermore, if the numbers $\epsilon_j>0$ are chosen 
small enough, then $F$ has maximal rank equal to $\dim X$ at every point of $K'_j$
(since this holds for the map $f$ on the bigger set $K_j$), and hence
$F$ is strongly dominating. 
\end{proof}

\begin{remark}
The same proof applies if $S$ is a reduced Stein space with $\dim S \ge \dim X$.
In this case, we just pick the balls $K_j$ \eqref{eq:incl}  in the regular locus of $S$.
\end{remark}

\section{Surjective algebraic maps to compact algebraically subelliptic manifolds}\label{sec:algebraic}

In this section we prove Theorem \ref{th:algebraic}. We begin by recalling the relevant notions.

An algebraic manifold $X$ is said to be {\em algebraically subelliptic} if it admits a finite family of algebraic
sprays $s_j\colon E_j\to X$ $(j=1,\ldots,k)$, defined on total spaces $E_j$ of algebraic vector bundles $\pi_j\colon E_j\to X$,
which is {\em dominating} in the sense that for each point $x\in X$ the vector subspaces 
$(ds_j)_{0_x}(E_{j,x}) \subset T_x X$ span the tangent space $T_x X$:
\[
	(ds_1)_{0_x}(E_{1,x}) +  \cdots + (ds_k)_{0_x}(E_{k,x}) = T_x X\quad \forall x\in X.
\]
See \cite[Definition 2.1]{Forstneric2006AJM} or \cite[Definition 5.5.11 (e)]{Forstneric2011} for the details. 
Here, $X$ could be a projective (or quasi-projective) algebraic manifold, although the same theory applies 
to more general algebraic manifolds. By an {\em algebraic map}, we always mean an algebraic morphism
without singularities.

The following result is \cite[Theorem 3.1]{Forstneric2006AJM}; see also \cite[Theorem 7.10.1]{Forstneric2011}.

%
%
%
\begin{theorem} \label{th:hRunge-algebraic}
Assume that $S$ is an affine algebraic manifold and $X$ is an algebraically subelliptic manifold.
Given an algebraic map $f\colon S \to X$, a compact $\Oscr(S)$-convex subset 
$K$ of $S$, an open set $U \subset S$ containing $K$, and a homotopy $f_t\colon U\to X$ of 
holomorphic maps $(t\in [0,1])$ with $f_0=f|_{U}$, there exists for every  $\epsilon >0$
an algebraic map $F\colon S\times \C\to X$ such that 
\[
    F(\cdotp,0)=f \quad\text{and}\quad  \sup_{p\in K,\, t\in [0,1]} \dist\left(F(p,t),f_t(p)\right)<\epsilon.
\]
\end{theorem}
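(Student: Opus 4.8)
The plan is to realize the given homotopy as a single algebraic map on $S\times\C$ by iterating a dominating algebraic spray, approximating the resulting holomorphic sections by global regular sections via an affine Oka--Weil theorem, and encoding the parameter $t$ through polynomial weights that vanish at $t=0$. First I would replace the dominating family $s_j\colon E_j\to X$ furnished by algebraic subellipticity by the composed (iterated) spray $s\colon E\to X$; this is again an algebraic dominating spray in the generalized sense, satisfying $s(0_x)=x$ and with $(ds)_{0_x}\colon E_x\to T_xX$ surjective for every $x\in X$. Domination means precisely that the algebraic map
\[
	\theta\colon E\to X\times X,\qquad \theta(e)=(\pi(e),s(e)),
\]
is a submersion along the zero section onto a neighborhood of the diagonal. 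Consequently, if $g\colon U'\to X$ is a holomorphic map on a neighborhood $U'$ of $K$ and $h\colon U'\to X$ is sufficiently $\Cscr^0$-close to $g$ there, then $h$ lifts through the spray: there is a small holomorphic section $\eta$ of $g^*E$ over a neighborhood of $K$ with $s\circ\eta=h$. This is the device that converts a small homotopy displacement of maps into a section of an algebraic bundle over $S$.

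The second ingredient is the algebraic approximation of such sections. Since $S$ is affine and $g$ and $E$ are algebraic, $g^*E\to S$ is an algebraic bundle over an affine base; realizing it as a complemented subbundle of a trivial bundle and invoking the classical Oka--Weil theorem on a closed algebraic embedding $S\hookrightarrow\C^N$, the holomorphic section $\eta$ defined near the $\Oscr(S)$-convex compact $K$ can be approximated, uniformly on $K$, by a \emph{global regular} section $\tilde\eta$ of $g^*E$ over all of $S$. Then $s\circ\tilde\eta\colon S\to X$ is an algebraic map approximating $h$ on $K$. It is at this point that the hypotheses that $S$ be affine and $K$ be $\Oscr(S)$-convex are used.

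Next I would subdivide $[0,1]$ into nodes $0=t_0<t_1<\cdots<t_N=1$ so finely that consecutive levels $f_{t_{i-1}},f_{t_i}$ are uniformly close on a neighborhood of $K$, and build regular sections inductively. Starting from $g_0:=f$, suppose $g_{i-1}\colon S\to X$ is algebraic and approximates $f_{t_{i-1}}$ on $K$; since $f_{t_i}$ is then close to $g_{i-1}$ on $K$, lift it through the spray to a small holomorphic section $\eta_i$ of $g_{i-1}^*E$ near $K$, approximate $\eta_i$ by a global regular section $\tilde\eta_i$, and set $g_i:=s\circ\tilde\eta_i$. To fuse these steps into one algebraic map carrying the parameter, let $\Sigma(p;\,e_1,\ldots,e_N)$ denote the iterated spray applied $N$ times starting from $f(p)$, so that $g_i(p)=\Sigma\bigl(p;\tilde\eta_1(p),\ldots,\tilde\eta_i(p),0,\ldots,0\bigr)$, and choose continuous ideal weights $\mu_i\colon[0,1]\to[0,1]$ with $\mu_i\equiv0$ on $[0,t_{i-1}]$, rising to $1$ on $[t_{i-1},t_i]$, and $\equiv1$ thereafter. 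By the Weierstrass theorem, approximate each $\mu_i$ uniformly on $[0,1]$ by a polynomial $\lambda_i$ with $\lambda_i(0)=0$, and define
\[
	F(p,t):=\Sigma\bigl(p;\,\lambda_1(t)\tilde\eta_1(p),\ldots,\lambda_N(t)\tilde\eta_N(p)\bigr).
\]
This $F$ is algebraic in $(p,t)$, being a composition of algebraic sprays, algebraic pullback bundles, global regular sections, and polynomials in $t$. Since every $\lambda_i(0)=0$, all sections reduce to the zero section at $t=0$ and $F(\cdot,0)=\Sigma(p;0,\ldots,0)=f$ holds \emph{exactly} on all of $S$; and by the fineness of the subdivision together with the continuity of $\Sigma$ in its fiber arguments, $F(\cdot,t)$ approximates $f_t$ on $K$ throughout $t\in[0,1]$.

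The main obstacle is precisely this final assembly. Algebraic maps admit no cut-off functions or partitions of unity, so a "long" homotopy cannot be followed by localizing in the parameter $t$, nor can separate spray steps be glued across disjoint $t$-ranges. The resolution is the iterated composed spray with polynomial weights: chaining lets one algebraic map traverse a path that no single spray application could reach, the common zero $\lambda_i(0)=0$ forces exact agreement with $f$, and the uniform polynomial approximation of the ramps $\mu_i$ is what controls the intermediate (non-nodal) values of $t$ and prevents Runge-type overshoot. The delicate estimate is to ensure that the values of $F(\cdot,t)$ for $t$ strictly between nodes — not merely at the $t_i$ — remain within $\epsilon$ of $f_t$ on $K$; this is achieved by taking $N$ large, the steps $\tilde\eta_i$ correspondingly small, and the polynomial weights uniformly close to the monotone ramps.
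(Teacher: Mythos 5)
First, a point of reference: the paper does not prove this theorem at all; it quotes it from \cite[Theorem 3.1]{Forstneric2006AJM} (see also \cite[Theorem 7.10.1]{Forstneric2011}). Your overall architecture — pass to a composed dominating spray, lift small displacements of maps through the spray, approximate holomorphic sections by algebraic ones via an affine Oka--Weil theorem, subdivide $[0,1]$ finely, and feed the parameter in through polynomials vanishing at $t=0$ — is indeed the architecture of the cited proof, and your closing paragraph correctly identifies the crux (no cut-offs in the algebraic category). However, your final assembly formula is ill-defined, and this is a genuine gap rather than a cosmetic one. The iterated spray $\Sigma(p;e_1,\ldots,e_N)$ is defined on a \emph{nested fiber product}, not on a direct sum of vector bundles: the slot $e_2$ must lie in $E_{s(e_1)}$, the slot $e_3$ in $E_{s(e_2)}$, and so on. Your section $\tilde\eta_2$ satisfies $\tilde\eta_2(p)\in E_{g_1(p)}$ with $g_1(p)=s(\tilde\eta_1(p))$. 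The moment $\lambda_1(t)\neq 1$, the first slot $\lambda_1(t)\tilde\eta_1(p)$ is still a legitimate element of $E_{f(p)}$, but the second slot is now required to lie in $E_{s(\lambda_1(t)\tilde\eta_1(p))}$, a fiber over a \emph{different} point of $X$ than $g_1(p)$; the element $\lambda_2(t)\tilde\eta_2(p)$ does not live there, and there is no canonical (let alone algebraic) way to transport it. Observe that your formula does typecheck for the ideal ramps $\mu_j$, precisely because those take only the values $0$ and $1$ outside the single active slot; it is the replacement of $\mu_j$ by polynomials $\lambda_j$ — the one step that is supposed to deliver algebraicity — that destroys well-definedness. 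Relatedly, the fiber of the composed bundle over a point of $X$ is not a vector space, so ``fixed sections over $S$ times scalar weights in $t$'' is not a meaningful operation on it.

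The repair, which is how the proof in \cite{Forstneric2006AJM} actually proceeds, is to let the parameter enter the induction instead of being inserted at the end. One constructs algebraic maps $F_i\colon S\times\C\to X$ with $F_0(p,t)=f(p)$, $F_i(\cdot,0)=f$, and $\dist\bigl(F_i(p,t),f_{\min(t,t_i)}(p)\bigr)$ small for $p\in K$, $t\in[0,1]$. At step $i$ the discrepancy between $F_{i-1}(p,t)$ and $f_{\min(t,t_i)}(p)$ is small for \emph{all} $t\in[0,1]$, so it lifts through the spray to a section of the pullback of the (composed) bundle by the parametrized map $F_{i-1}$ over a neighborhood of $K\times[0,1]$ in $S\times\C$ — now the fibers match automatically for every $t$, because the base point is $F_{i-1}(p,t)$ itself. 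This lift vanishes identically at $t=0$; one makes it polynomial in $t$ with exact vanishing at $t=0$ (e.g.\ Bernstein approximation in $t$ preserves the endpoint value), and then approximates it by an algebraic section using the affine Oka--Weil argument you describe, applied on the affine manifold $S\times\C$ and the compact set $K\times\overline\D\supset K\times[0,1]$, which is $\Oscr(S\times\C)$-convex (the composed bundle is handled slot by slot, each slot being a genuine algebraic vector bundle over $S\times\C$ by the inductive structure). Setting $F_i:=s\circ(\text{this section})$ closes the induction, and $F=F_N$ is the desired map. In short: your sections must be anchored to the previously built \emph{parametrized} algebraic map over $S\times\C$, not to the node maps $g_{i-1}$ over $S$; with that restructuring your argument becomes essentially the cited proof.
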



%
%
\begin{proof}[Proof of Theorem \ref{th:algebraic}]
The proof uses Theorem \ref{th:hRunge-algebraic} and is similar to that of Theorem \ref{th:main}.
The main difference is that the initial map $f\colon S\to X$ must be algebraic.
For the sake of simplicity, we present the details only in the special case when $S=\C^n$ with $n=\dim X$.

Fix a point $x_0\in X$ and let $f\colon \C^n\to X$ be the constant map $f(z)=x_0 \in X$.

Since $X$ is compact, there is finite family of pairs of compact sets $L'_j \subset L_j\subset X$ $(j=1,\ldots,\ell)$
satisfying properties (i)--(iii) stated at the beginning of proof of Theorem \ref{th:main} 
(see Section \ref{sec:proofmain}). In particular, each set $L_j$ is a ball in a suitable local coordinate,
and we have that $\cup_{j=1}^\ell L'_j=X$.

Let $n=\dim X$. Choose pairwise disjoint closed balls $K_1,\ldots,K_\ell$ in $\C^n$ whose union
$K:=\cup_{j=1}^\ell K_j$ is polynomially convex. Let $p_j\in K_j$ denote the center of $K_j$.
For each $j=1,\ldots,\ell$ there are an open ball $U_j\subset\C^n$ containing $K_j$ and a biholomorphic map
$g_j\colon U_j\to g_j(U_j)\subset X$ such that $g_j(K_j)=L_j$.
We may assume that the sets  $U_1,\ldots,U_\ell$ are pairwise disjoint.
By using a contraction of $K_j$ and $L_j$ to their respective centers, and after shrinking the neighborhoods
$U_j\supset K_j$ if necessary, we can find homotopies of holomorphic maps $f_{j,t}\colon U_j\to X$
$(t\in [0,1],\ j=1,\ldots,\ell)$ such that 
\[
	f_{j,0}= f|_{U_j}\quad \text{and}\quad f_{j,1} = g_j\quad\text{for all}\ j=1,\ldots,\ell.
\]
Set $U=\cup_{j=1}^\ell U_j$ and denote by $f_t\colon U\to X$ the holomorphic map whose restriction
to $U_j$ agrees with $f_{j,t}$ for each $ j=1,\ldots,\ell$. Then $f_0=f|_{U}$ is the constant map
$U\to x_0$. 

Applying Theorem \ref{th:hRunge-algebraic} to the source manifold $S=\C^n$, 
the constant (algebraic) map $f\colon S\to x_0\in X$, and the homotopy $\{f_t\}_{t\in[0,1]}$ 
furnishes an algebraic map $F\colon \C^n\to X$ whose restriction to $K_j$ approximates the map $g_j$
for each $j=1,\ldots,\ell$. Assuming that the approximation is close enough, 
we see as in the proof of Theorem \ref{th:main} that $F(\C^n)=X$ and that $F$ can be chosen
to be strongly dominating.
\end{proof}

\begin{remark}\label{rem:algflexible}
A major source of examples of algebraically elliptic manifolds
are the {\em algebraically flexible} manifods; see e.g.\ \cite[Definition 12]{Kutzschebauch2014}.
An algebraic manifold $X$ is said to be algebraically flexible of it admits finitely many algebraic vector fields
$V_1,\ldots,V_N$ with complete algebraic flows $\phi_{j,t}$ $(t\in\C,\ j=1,\ldots,N)$,
such that the vectors  $V_1(x),\ldots,V_N(x)$ span the tangent space $T_x X$ at every point $x\in X$. 
Note that every $(\phi_{j,t})_{t\in \C}$ is a unipotent 1-parameter group of algebraic automorphisms of $X$.
The composition of the flows $\phi_{1,t_1}\circ \cdots \circ \phi_{N,t_N}$ is a 
dominating algebraic spray $X\times \C^N\to X$, and hence such $X$ is algebraically elliptic.

For a survey of this subject, we refer to Kutzschebauch's paper \cite{Kutzschebauch2014}.
\end{remark}


\subsection*{Acknowledgements}
The author is supported in part by the grants P1-0291 and  J1-7256 from 
ARRS, Republic of Slovenia.  This work was done during my visit at the Center for Advanced Study 
in Oslo, and I wish to thank this institution for the invitation, partial support and excellent working condition.

I thank J\"org Winkelmann for having asked the question that is answered (in a more precise form) 
by  Theorems \ref{th:main} and \ref{th:algebraic}, and Fr\'ed\'eric Campana for discussions concerning
the relationship between the basic Oka property and specialness of compact complex manifolds.
These communications took place at the conference {\em Frontiers in Elliptic Holomorphic Geometry} 
in Jevnaker, Norway in October 2016. 
I thank Finnur L\'arusson for helpful suggestions concerning the terminology and the precise statements of Theorems \ref{th:main} and \ref{th:algebraic}. Finally, I thank Simone Diverio for references to the recent 
developments on K\"ahler manifolds with semi-negative holomorphic sectional curvature, 
and Tyson Ritter for having pointed out the example by Dixon and Esterle
related to Problem  \ref{prob:ball}.


{\bibliographystyle{abbrv} \bibliography{Surjective}}


\vspace*{0.5cm}
\noindent Franc Forstneri\v c

\noindent Faculty of Mathematics and Physics, University of Ljubljana, Jadranska 19, SI--1000 Ljubljana, Slovenia

\noindent Institute of Mathematics, Physics and Mechanics, Jadranska 19, SI--1000 Ljubljana, Slovenia

\noindent e-mail: {\tt franc.forstneric@fmf.uni-lj.si}

\end{document}